\newtheorem{thm}{Theorem}[subsection]
\newtheorem{lem}[thm]{Lemma}
\theoremstyle{plain}
\theoremstyle{definition}
\newtheorem{example}{Example}[section]
\numberwithin {equation}{section}
\title
{A new  effective  weighted modified perturbation technique for solving a class of hypersingular integral equations
}
\author[1]{Mostafa Akrami\thanks{Corresponding author:mostafa.akrami@mun.ca}}
\author[2]{Taher Lotfi\thanks{lotfitaher@yahoo.com, lotfi@iauh.ac.ir}}
\author[2]{Farajollah Mohammadi Yaghoobi\thanks{fm.yaghoobi@gmail.com}}
\affil[1]{Department of Earth Sciences, Memorial University of Newfoundland, St. John's, NL, A1B 3X7, Canada.}
\affil[2]{Department of Mathematics, Hamedan Branch, Islamic Azad
University, Hamedan 65138, Iran.}
\begin{document}

\date{}
\maketitle



\begin{abstract}
\noindent This paper is an attempt to
solve an important class of hypersingular integral equations of the second kind. To this end, we apply a new weighted and modified perturbation method  which includes some special cases of the  Adomian decomposition method. To justify the efficiency   and
applicability of the proposed method,  we examine some examples. The
principal aspects of this method are its simplicity along with fast
computations.

\medskip
\noindent \textbf{Keywords}: Hypersingular integral equations, perturbation method, modified perturbation method, weighted perturbation method, Adomian decomposition method.
\end{abstract}
\section{Introduction}
Enormous  physical and engineering problems, such as crack problems in fracture mechanics \cite{Chan}, water wave scattering problems involving barriers \cite{Kaya}, diffraction of electromagnetic wave and aerodynamic problems \cite{Lifanov,Mandal1},   generalization of the elliptic wing case of Prandtl's equation \cite{Mandal2,Mandal3}  are  hypersingular integral  equations of the second kind. For more applications one can consult \cite{Dutta, Gori,Chen, Chakrabarti,Wazwaz}.

In this work, we consider the following hypersingular integral equation of the second kind
\begin{equation}\label{s1-f1}
u(x)=\sqrt{1-x^2}f(x)+\frac{\alpha}{\pi}\sqrt{1-x^2}\int_{-1}^{1}\frac{u(t)}{(t-x)^n}dt,\qquad n=2,3,\dots,\,\,-1<x<1,
\end{equation}
where $\alpha$ is a real positive number, $f(x)$ is a given
function, and $u(x)$ is unknown. Moreover, $u(\pm 1)=0$. Under
these assumptions, Eq. (\ref{s1-f1}) can be called the generalized
case for the oval wing of Prandtl's equation. In addition, Eq. (\ref{s1-f1}) is
referred to as Hadamard finite part \cite{Mandal1}
\begin{equation}\label{s1-f2}
\int_{-1}^{1}\frac{u(t)}{(t-x)^2}dt=\lim_{\epsilon\rightarrow0^+}\bigg[\int_{-1}^{x-\epsilon}\frac{u(t)}{(t-x)^2}dt
+\int_{x+\epsilon}^{1}\frac{u(t)}{(t-x)^2}dt-\frac{u(x+\epsilon)+u(x-\epsilon)}{\epsilon}\bigg].
\end{equation}
Recently, Mahmoudi  introduced an efficient method to solve (\ref{s1-f1}) based on the weighted modified Adomian's decomposition method (WMADM). Motivated by his work, we attempt  to  solve Eq. (\ref{s1-f1}) by applying the weighted modified  perturbation method (WMPM),  which includes the mentioned method as the special case. The rest of this paper is organized as follows: Section 2 is devoted to the description of the solution method. It contains the classic  perturbation method (PM) which does not work even for simple examples here. Therefore, we try to put forward the modified  and  weighted modified version of the classic PM in such a way that it is possible to find a solution quickly. In other words, the introduced weighted modified version can save many of the computations and prevents the construction of commonly used series solutions which can be considered other  virtues for these methods. To support the given method, some applications and illustrations are illustrated  in Section 3 . Finally, Section 4 includes some  concluding remarks.
\section{Description of the solution method}
This section deals with applying the perturbation technique for
solving hypersingular integral equation of the second kind
(\ref{s1-f1}). Before describing the whole process, for the sake
of simplicity in computations \cite{Mandal1}, in  Eq.
(\ref{s1-f1}) it is assumed that:
\begin{equation}\label{s2-f51}
u(x)=\sqrt{1-x^2}\psi(x),
\end{equation}
where $\psi(x)$ is a smooth function. Substituting (\ref{s2-f51})
into (\ref{s1-f1}) and simplifying, it induces:
\begin{equation}\label{s2-f5}
\psi(x)=f(x)+\frac{\alpha}{\pi
\Gamma(n)}\frac{d^{n-1}}{dx^{n-1}}\int_{-1}^{1}\frac{\sqrt{1-t^2}\psi(t)}{t-x}dt.
\end{equation}
We need the following  lemma \cite{Mandal2}:
\begin{lem}\label{lem} If
\[I_j(x)=\int_{-1}^{1}\frac{\sqrt{1-t^2}}{t-x}\,t^j\,dt,\quad j=0,1,\cdots,\]
then
\begin{align}\label{s2-f6}
I_j(x)=-\pi
x^{j+1}+\sum_{i=0}^{j-1}\frac{1+(-1)^i}{4}\frac{\Gamma(1/2)\Gamma((i+1)/2)}{\Gamma((i+4)/2)}x^{j-i-1}\ \ \
j=0,1,\cdots,
\end{align}
where $\Gamma(n)$ stands for the Gamma function.
\end{lem}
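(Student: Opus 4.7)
The plan is to decompose $t^j/(t-x)$ into a purely singular piece plus a polynomial remainder, so that $I_j(x)$ splits into a known principal-value integral and a sum of ordinary moments of the weight $\sqrt{1-t^2}$.

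First I would write the elementary algebraic identity
\[
\frac{t^j}{t-x} \;=\; \frac{x^j}{t-x} \;+\; \sum_{k=0}^{j-1} t^{\,j-1-k}\, x^{k},
\]
which follows from $t^j-x^j=(t-x)\sum_{k=0}^{j-1} t^{j-1-k}x^k$ (the sum being empty when $j=0$). Integrating against $\sqrt{1-t^2}$ on $(-1,1)$ and using linearity, I would obtain
\[
I_j(x) \;=\; x^{j}\!\int_{-1}^{1}\!\frac{\sqrt{1-t^{2}}}{t-x}\,dt \;+\; \sum_{k=0}^{j-1} x^{k}\!\int_{-1}^{1}\sqrt{1-t^{2}}\,t^{\,j-1-k}\,dt.
\]

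Second, for the singular piece I would invoke the classical Cauchy principal-value formula
$\int_{-1}^{1}\frac{\sqrt{1-t^{2}}}{t-x}\,dt = -\pi x$ for $-1<x<1$ (which can be verified by the substitution $t=\cos\theta$, $x=\cos\varphi$, reducing it to a standard Glauert-type integral). This immediately yields the leading term $-\pi x^{j+1}$.

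Third, for the polynomial moments $M_m:=\int_{-1}^{1}\sqrt{1-t^{2}}\,t^{m}\,dt$ with $m=j-1-k$, I would note parity kills the odd cases, producing the factor $\tfrac{1+(-1)^m}{2}$; for even $m$ the substitution $s=t^{2}$ converts the integral to a Beta function,
\[
M_m \;=\; \frac{1+(-1)^m}{2}\,B\!\left(\tfrac{m+1}{2},\tfrac{3}{2}\right) \;=\; \frac{1+(-1)^m}{2}\cdot\frac{\Gamma\!\left(\tfrac{m+1}{2}\right)\Gamma\!\left(\tfrac{3}{2}\right)}{\Gamma\!\left(\tfrac{m+4}{2}\right)}.
\]
Using $\Gamma(3/2)=\tfrac12\Gamma(1/2)$ absorbs a factor of $\tfrac12$ into the prefactor $\tfrac{1+(-1)^m}{2}$, turning it into $\tfrac{1+(-1)^m}{4}\,\Gamma(1/2)$.

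Finally I would reindex by setting $i=j-1-k$ so that the summand carries $x^{j-1-i}$ and the Gamma ratio becomes $\Gamma((i+1)/2)/\Gamma((i+4)/2)$, matching \eqref{s2-f6} exactly. The only real subtlety is justifying the principal-value evaluation in step two and being careful that the identity in step one is interpreted in the Hadamard/principal-value sense so that the splitting is legitimate; the remaining work is bookkeeping of indices.
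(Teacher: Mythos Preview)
Your argument is correct: the polynomial division $t^j-x^j=(t-x)\sum_{k=0}^{j-1}t^{j-1-k}x^k$, the Glauert integral $\int_{-1}^{1}\sqrt{1-t^2}/(t-x)\,dt=-\pi x$, the Beta-function evaluation of the even moments, and the reindexing $i=j-1-k$ assemble precisely into \eqref{s2-f6}. There is nothing to compare it against, however, because the paper does not prove this lemma at all; it merely quotes the formula from \cite{Mandal2}. Your write-up therefore supplies what the paper omits, and the route you take (split off the singular part, reduce the rest to Beta integrals) is the standard one used in the cited literature.
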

Also, we have the following lemma:
\begin{lem}\label{lem}
For $k\geq 1$,
\begin{equation}
I_{2k}(x)=xI_{2k-1}(x).
\end{equation}
\end{lem}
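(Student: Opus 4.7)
My plan is to avoid plugging into the closed form of Lemma~\ref{lem} and instead argue directly from the integral definition, which makes the identity transparent and short. The key algebraic move is the decomposition
\begin{equation*}
\frac{t^{2k}}{t-x}=\frac{t\cdot t^{2k-1}}{t-x}=\frac{(t-x)\,t^{2k-1}+x\,t^{2k-1}}{t-x}=t^{2k-1}+\frac{x\,t^{2k-1}}{t-x}.
\end{equation*}
Multiplying by $\sqrt{1-t^2}$ and integrating over $[-1,1]$ yields
\begin{equation*}
I_{2k}(x)=\int_{-1}^{1}\sqrt{1-t^2}\,t^{2k-1}\,dt+x\int_{-1}^{1}\frac{\sqrt{1-t^2}\,t^{2k-1}}{t-x}\,dt.
\end{equation*}

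Next I would observe that the first integral vanishes by a parity argument: the weight $\sqrt{1-t^2}$ is even in $t$ while $t^{2k-1}$ is odd (since $k\geq 1$, the exponent $2k-1$ is a nonnegative odd integer), so the product is odd and its integral over the symmetric interval $[-1,1]$ equals zero. The second integral is, by definition, $I_{2k-1}(x)$, and combining these gives $I_{2k}(x)=x\,I_{2k-1}(x)$.

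The only step that requires a moment of care is the legitimacy of splitting the integral: both resulting pieces must be interpreted in the same sense (ordinary improper / Cauchy principal value) as the original $I_{2k}(x)$. Since the added and subtracted piece $t^{2k-1}$ is a polynomial, hence continuous on $[-1,1]$, it contributes an absolutely convergent integral, and the Cauchy principal-value structure of the singular piece $\int \sqrt{1-t^2}\,t^{2k-1}/(t-x)\,dt$ is preserved; so the manipulation is valid in exactly the same framework used to define $I_j(x)$ in Lemma~\ref{lem}. I expect this interpretive point, rather than the algebra, to be the only subtlety worth flagging, and a one-line remark should suffice.

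As an independent sanity check I would briefly verify consistency with the explicit formula (\ref{s2-f6}): in that formula only even indices $i$ contribute (because of the factor $1+(-1)^i$), so the non-leading coefficients of $I_{2k}(x)$ and of $xI_{2k-1}(x)$ arise from the same terms (parametrized by $i=2m$, $m=0,\dots,k-1$), while the leading terms $-\pi x^{2k+1}$ obviously agree. This cross-check requires no new calculation and confirms that no hidden contribution has been dropped.
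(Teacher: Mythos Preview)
Your argument is correct, but it follows a genuinely different route from the paper's own proof. The paper substitutes the closed form (\ref{s2-f6}) for $I_{2k}(x)$, notes that the summand at $i=2k-1$ vanishes because of the factor $1+(-1)^i$, and then factors an $x$ out of every remaining term to recognize $xI_{2k-1}(x)$. In other words, the paper's proof is exactly what you describe as your ``sanity check.'' Your main argument, by contrast, bypasses the explicit formula entirely: the one-line telescoping $t^{2k}=(t-x)t^{2k-1}+x\,t^{2k-1}$ together with the parity observation $\int_{-1}^{1}\sqrt{1-t^2}\,t^{2k-1}\,dt=0$ gives the result directly from the integral definition. Your approach is more elementary and self-contained (it does not rely on the previous lemma), and it makes clear \emph{why} the identity holds only for even indices, since the same decomposition applied to $I_{2k+1}(x)$ leaves the nonzero moment $\int_{-1}^{1}\sqrt{1-t^2}\,t^{2k}\,dt$. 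The paper's approach, on the other hand, has the virtue of simultaneously verifying internal consistency of formula (\ref{s2-f6}).
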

\begin{proof} By (\ref{s2-f6}), we have:
\begin{equation}\begin{split}
I_{2k}(x)&=-\pi
x^{2k+1}+\sum_{i=0}^{2k-1}\frac{1+(-1)^i}{4}\frac{\Gamma(1/2)\Gamma((i+1)/2)}{\Gamma((i+4)/2)}x^{2k-i-1}\\&=
x\big(-\pi
x^{2k}+\sum_{i=0}^{2k-2}\frac{1+(-1)^i}{4}\frac{\Gamma(1/2)\Gamma((i+1)/2)}{\Gamma((i+4)/2)}x^{2k-i-2}\big)=xI_{2k-1}(x)
\end{split}\end{equation}
\end{proof}
Here, we try to explain the new method for solving (\ref{s1-f1}).  For this purpose, based upon the Eq. (\ref{s2-f5}), we define the operator:
\begin{equation}\label{s2-f1}
L(\psi)=L_1(\psi)+L_2(\psi)=f(x).
\end{equation}
Next, we consider the perturbation technique as follows:
\begin{equation}\label{s2-f2}
L_1(\psi)=f(x)+\varepsilon L_2(\psi).
\end{equation}
 Now,  the stage is ready for computing with the perturbation
technique. Considering Eq. (\ref{s2-f5}), we set:
\begin{equation}\label{s2-f7}
L_1(\psi)=\psi(x), \ \ \ L_2(\psi)=-\frac{\alpha}{\pi
\Gamma(n)}\frac{d^{n-1}}{dx^{n-1}}\int_{-1}^{1}\frac{\sqrt{1-t^2}\psi(t)}{t-x}dt,
\end{equation}
where
\begin{equation}\label{s2-f8}
\psi(x)=\lim_{\varepsilon\to
1}\sum_{k=0}^{\infty}\psi_k(x)\varepsilon^k.
\end{equation}
The series (\ref{s2-f8}) converges to the exact solution of Eq.
(\ref{s2-f5}) provided that such a solution exists \cite{Bellman,Nayfeh}. Substituting
(\ref{s2-f7}) and (\ref{s2-f8}) into (\ref{s2-f2}), one obtains
\begin{equation}\label{s2-f9}
\sum_{k=0}^{\infty}\psi_k(x)\varepsilon^k=f(x)+\frac{\alpha}{\pi
\Gamma(n)}\frac{d^{n-1}}{dx^{n-1}}\sum_{k=0}^{\infty}\varepsilon^{k+1}\int_{-1}^{1}\frac{\sqrt{1-t^2}\psi_k(t)}{t-x}dt.
\end{equation}
Finally, equating the terms with like powers of the embedding
parameter $\varepsilon$, we can compute $\psi_n$ by the following
recurrence relation:

\begin{equation}\label{s2-f10}
\begin{cases}
\varepsilon^0:&\ \psi_0(x)=f(x),\\
\varepsilon^k:&\ \psi_{k}(x)=\frac{\alpha}{\pi
\Gamma(n)}\frac{d^{n-1}}{dx^{n-1}}\int_{-1}^{1}\frac{\sqrt{1-t^2}}{t-x}\,\psi_{k-1}(t)\,dt,
\ \ k=1,2,\cdots.
\end{cases}
\end{equation}
Note that this method generates the Adomian decomposition method (ADM), and  we  state this fact formally in the following theorem.
\begin{thm}\label{Theorem1} If $\psi_0(x)=f(x)$, then the relation
\[\psi_k(x)=
\frac{\alpha}{\pi
\Gamma(n)}\frac{d^{n-1}}{dx^{n-1}}\int_{-1}^{1}\frac{\sqrt{1-t^2}}{t-x}\,\psi_{k-1}(t)\,dt,\quad k=1,2,\dots,\]
generates Adomian decomposition method for solving (\ref{s1-f1}).
\end{thm}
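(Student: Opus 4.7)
The plan is to show that the recurrence produced by the perturbation scheme in (\ref{s2-f10}) coincides term-by-term with the Adomian decomposition method (ADM) recurrence applied to equation (\ref{s2-f5}). Since the theorem essentially asserts an equivalence between two algorithmic constructions, the task is less a computation and more a matter of unrolling each scheme's definition and comparing them.

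First I would recall the canonical ADM setup. Writing (\ref{s2-f5}) in operator form as $\psi = f + N(\psi)$, where
\[
N(\psi) = \frac{\alpha}{\pi\Gamma(n)}\,\frac{d^{n-1}}{dx^{n-1}}\!\int_{-1}^{1}\!\frac{\sqrt{1-t^{2}}\,\psi(t)}{t-x}\,dt,
\]
the ADM postulates a decomposition $\psi(x)=\sum_{k=0}^{\infty}\psi_{k}(x)$. In the general Adomian framework one substitutes this series into the equation and replaces any nonlinear part by Adomian polynomials $A_{k}$; here, however, $N$ is linear, so the Adomian polynomials collapse to $A_{k}=N(\psi_{k})$. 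Substituting the series and equating like components gives $\psi_{0}=f$ and $\psi_{k}=N(\psi_{k-1})$ for $k\ge 1$.

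Next I would compare this with the perturbation recurrence. Setting $\varepsilon=1$ in (\ref{s2-f8}) and matching the powers of $\varepsilon$ that were equated to produce (\ref{s2-f10}), the scheme yields exactly $\psi_{0}=f$ and
\[
\psi_{k}(x) = \frac{\alpha}{\pi\Gamma(n)}\,\frac{d^{n-1}}{dx^{n-1}}\!\int_{-1}^{1}\!\frac{\sqrt{1-t^{2}}}{t-x}\,\psi_{k-1}(t)\,dt = N(\psi_{k-1}),
\]
which is identical to the ADM recurrence just derived. Hence both algorithms produce the same sequence $\{\psi_{k}\}$ starting from the same seed $\psi_{0}=f$, and therefore the same partial sums, which is what it means for the perturbation recurrence to "generate" ADM for (\ref{s1-f1}).

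The only genuine subtlety, and the part that deserves a short remark rather than heavy computation, is the justification that the Adomian polynomials reduce to $N(\psi_{k-1})$. This follows from linearity of the differential operator $d^{n-1}/dx^{n-1}$ and of the singular integral (interpreted as a Hadamard finite part as in (\ref{s1-f2})), so that $N\!\left(\sum_{k}\psi_{k}\right)=\sum_{k}N(\psi_{k})$ formally. With this observation in place, the identification between the two recurrences becomes immediate, and no convergence or regularity argument beyond what is already invoked for (\ref{s2-f8}) via \cite{Bellman,Nayfeh} is required.
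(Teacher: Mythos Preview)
Your argument is correct. Note, however, that the paper does not supply a proof of this theorem at all: it simply states the result as an observation immediately after deriving the recurrence (\ref{s2-f10}), and then moves on to the modified method. What you have written therefore fills in the justification that the paper leaves implicit. The essential content you add --- that the operator $N$ is linear, so the Adomian polynomials $A_k$ collapse to $N(\psi_k)$ and the ADM recurrence becomes $\psi_0=f$, $\psi_k=N(\psi_{k-1})$ --- is exactly the point needed to make the identification with (\ref{s2-f10}) rigorous, and it is neither stated nor argued in the paper.
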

\subsection{Modified perturbation method}
In some situations, Formula (\ref{s2-f10}) does not work perfectly
or the solution series converges slowly. To overcome these
difficulties, it is still possible to modify the proposed
perturbation technique (\ref{s2-f10}) as follows. First, we split
$f(x)$ as $f(x)=f_1(x)+f_2(x)$. Then, we set
\begin{equation}\label{s2-f11}
L_1(\psi)=\psi, \ \ \ L_2(\psi)=-\frac{\alpha}{\pi
\Gamma(n)}\frac{d^{n-1}}{dx^{n-1}}\int_{-1}^{1}\frac{\sqrt{1-t^2}\psi}{t-x}dt+f_2(x).
\end{equation}
At once, we can modify (\ref{s2-f10}) and call it modified
perturbation technique given by:
\begin{equation}\label{s2-f12}
\begin{cases}
\psi_0(x)=f_1(x),\\
\psi_1(x)=f_2(x)+\frac{\alpha}{\pi \Gamma(n)}\frac{d^{n-1}}{dx^{n-1}}\int_{-1}^{1}\frac{\sqrt{1-t^2}}{t-x}\,\psi_0(t)\,dt,\\
\psi_{k}(x)=\frac{\alpha}{\pi
\Gamma(n)}\frac{d^{n-1}}{dx^{n-1}}\int_{-1}^{1}\frac{\sqrt{1-t^2}}{t-x}\,\psi_{k-1}(t)\,dt,
\ \ k=2,3,\cdots.
\end{cases}
\end{equation}
Similar to the Theorem (\ref{Theorem1}), we have:

\begin{thm}\label{Theorem2} If $\psi_0(x)=f_1(x)$, $\psi_1(x)=f_2(x)+\frac{\alpha}{\pi \Gamma(n)}\frac{d^{n-1}}{dx^{n-1}}\int_{-1}^{1}\frac{\sqrt{1-t^2}}{t-x}\,\psi_0(t)\,dt$, then
\[\psi_{n+1}(x)=\frac{\alpha}{\pi \Gamma(n)}\frac{d^{n-1}}{dx^{n-1}}\int_{-1}^{1}\frac{\sqrt{1-t^2}}{t-x}\,\psi_n(t)\,dt,\quad n=1,2\dots,\]
 generates the  modified
Adomian decomposition method (MADM) for solving (\ref{s1-f1}).
\end{thm}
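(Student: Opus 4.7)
The plan is to mirror the argument used for Theorem \ref{Theorem1}, but with the inhomogeneity $f(x)$ replaced by its splitting $f(x)=f_1(x)+f_2(x)$. First I would write down explicitly what the modified Adomian decomposition scheme looks like when applied to the operator equation (\ref{s2-f5}). Recall that the standard ADM, when $f$ is split as $f=f_1+f_2$, assigns the zeroth term $\psi_0=f_1$, absorbs $f_2$ into the first correction $\psi_1$, and then propagates the remaining terms through the linear operator $L_2$ only. Thus the MADM recurrence for (\ref{s2-f5}) reads $\psi_0=f_1$, $\psi_1=f_2-L_2(\psi_0)$ and $\psi_{n+1}=-L_2(\psi_n)$ for $n\geq 1$, where $L_2$ is the operator introduced in (\ref{s2-f11}) with the sign convention of (\ref{s2-f7}).

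Next I would verify that substituting the perturbation ansatz (\ref{s2-f8}) into (\ref{s2-f2}) with the modified splitting (\ref{s2-f11}) and equating like powers of $\varepsilon$ reproduces precisely (\ref{s2-f12}). This is essentially the same bookkeeping that yielded (\ref{s2-f10}) in the unmodified case, the only difference being that the $\varepsilon^0$ coefficient now produces $\psi_0=f_1$ while the $\varepsilon^1$ coefficient picks up both $f_2$ and the first application of the integral operator to $\psi_0$. From $\varepsilon^k$ with $k\geq 2$, the term $f_2$ no longer contributes and we recover the pure recurrence $\psi_k=-L_2(\psi_{k-1})$.

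Finally, I would compare term by term: the MPM recurrence (\ref{s2-f12}) and the MADM recurrence stated above agree at $k=0$ (both give $f_1$), at $k=1$ (both give $f_2+\frac{\alpha}{\pi\Gamma(n)}\frac{d^{n-1}}{dx^{n-1}}\!\int_{-1}^{1}\!\tfrac{\sqrt{1-t^2}}{t-x}\psi_0(t)\,dt$, noting the sign convention in (\ref{s2-f7}) that makes $-L_2(\psi_0)$ equal to the integral operator acting on $\psi_0$), and for $k\geq 2$ both are governed by the identical linear recurrence applied to $\psi_{k-1}$. A short induction on $k$ then shows that the two sequences coincide as functions, so their sums yield the same approximation to $\psi(x)$, which is the assertion of the theorem.

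The routine part is the power-matching in $\varepsilon$. The main obstacle, and the point requiring care, is the sign and normalization convention: one must keep track of the minus sign in the definition of $L_2$ from (\ref{s2-f7})–(\ref{s2-f11}) so that $-L_2$ produces precisely the positive integral operator appearing in (\ref{s2-f12}), and one must be explicit about which variant of ``MADM'' is being matched, since the scheme depends on how $f$ is split. Once this convention is pinned down at the start, the rest of the verification is direct.
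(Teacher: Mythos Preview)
The paper does not actually supply a proof of this theorem: it is stated immediately after the derivation of the recurrence (\ref{s2-f12}), prefaced only by the remark ``Similar to the Theorem (\ref{Theorem1}), we have,'' and Theorem \ref{Theorem1} itself is likewise stated without proof. Your plan is a correct and considerably more explicit version of the implicit reasoning the paper relies on---namely, that equating powers of $\varepsilon$ in the modified splitting (\ref{s2-f11}) yields (\ref{s2-f12}), and that (\ref{s2-f12}) is by inspection the MADM recurrence for the linear equation (\ref{s2-f5}). Your attention to the sign convention in $L_2$ and the explicit induction on $k$ are improvements over what the paper offers, not departures from it.
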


\subsection{Weighted modified perturbation method}
Choosing functions $f_1(x)$ and $f_2(x)$ in applying the modified
perturbation technique deeply affects the rate of convergence.
Generally speaking, they should be chosen so that $\psi_k=0$ for
$k=1,2,\cdots$. To select these functions effectively, the weighted
modified perturbation technique is advised. So, we  pay attention
to this issue when $f(x)$ is a polynomial and leave  the other
cases for future research. More details can be found in
\cite{Abbasbandy1,Abbasbandy2,Mahmoudi,Wazwaz}.

Suppose that $f(x)$ in (\ref{s2-f5}) is a polynomial with degree
$m$. Hence, $\psi(x)$ must be a polynomial of the same degree. In
other words, let:
\begin{equation}\label{s2-f13}
f(x)=\sum_{i=0}^ma_i\,x^i.
\end{equation}
It is possible to write $f(x)=f_1(x)+f_2(x)$, where
\begin{align}\label{s2-f14}
f_1(x)&=\sum_{i=0}^mb_i\,x^i,\notag\\
f_2(x)&=\sum_{i=0}^m(a_i-b_i)\,x^i.
\end{align}
The unknowns $b_i$, $i=0,1,\cdots,m$ are determined in the modified perturbation technique
(\ref{s2-f12})in such a way  that
$\psi_1(x)=0$ . This process is called the weighted modified
perturbation technique and will be illustrated in the next section.\\

\section{Applications and illustrations}
This section is concerned with applying the modified perturbation
technique (\ref{s2-f10}) or the weighted modified perturbation
technique (\ref{s2-f12}) to solve some concretes of the hypersingular integral
equation (\ref{s1-f1}) or equivalents (\ref{s2-f5}).

\begin{example} \cite{Mahmoudi} Consider:

\begin{equation}\label{s3-f1}
\begin{split}
u(x)=2\pi\sqrt{1-x^2}+\frac{1}{2}\sqrt{1-x^2}\int_{-1}^{1}\frac{u(t)}{(t-x)^2}dt,
\ \ \ -1<x<1.
\end{split}\end{equation}

Substituting    $u(x)=\sqrt{1-x^2}\psi(x)$ into (\ref{s3-f1}), one obtains

\begin{equation}\label{s3-f2}
\begin{split}
\psi(x)-\frac{1}{2}\frac{d}{dx}\int_{-1}^{1}\frac{\sqrt{1-t^2}\psi(t)}{t-x}dt=2\pi,
\ \ \ -1<x<1.
\end{split}\end{equation}

By  perturbation technique (\ref{s2-f10}), Lemma
(\ref{lem}), and  (\ref{s3-f2}), we have:
\begin{equation}\begin{split}
\psi_n(x)=\frac{(-1)^n \pi^{n+1}}{2^{n-1}}, \ \ n=0,1,2,\cdots.
\end{split}\end{equation}

Therefore, from (\ref{s2-f8}) we have:
\begin{equation}\begin{split}
&\psi(x)=\sum_{n=0}^{\infty}\frac{(-1)^n \pi^{n+1}}{2^{n-1}},
\end{split}\end{equation}
which is a divergent series since $\lim_{n\to\infty}{\psi_n}\neq
0$.

So, we  apply  the weighted modified
perturbation technique (\ref{s2-f12}). Let  $f_1(x)=b_0$ and
$f_2(x)=2\pi-b_0$. Thus
\begin{equation}\begin{split}
&\psi_0(x)=b_0, \ \ \  \psi_1(x)=2\pi-b_0-b_0\frac{\pi}{2}.
\end{split}\end{equation}
Putting $\psi_1(x)=0$ leads to:

\begin{equation*}\begin{split}
b_0=\frac{4\pi}{\pi+2}.
\end{split}\end{equation*}
Consequently,
\begin{equation}\begin{split}
\psi(x)=\frac{4\pi}{\pi+2},
\end{split}\end{equation}
and

\begin{equation}\begin{split}
u(x)=\frac{4\pi}{\pi+2}\sqrt{1-x^2},
\end{split}\end{equation}
which are  the exact solutions of the equivalent  hypersingular integral equations (\ref{s3-f2}) and  (\ref{s3-f1}), respectively.

\end{example}

\begin{example} In this example, we try to solve the general case of the hypersingular integral equation by using the weighted modified perturbation technique. Let:

\begin{equation}\label{s3-f18}
\begin{split}
u(x)=\big(a_0+a_1x+a_2x^2+\cdots+
a_mx^m\big)\sqrt{1-x^2}+\frac{\alpha}{\pi}\sqrt{1-x^2}\int_{-1}^{1}\frac{u(t)}{(t-x)^n}dt,
\ \ n\geq2,\ \ \ -1<x<1.
\end{split}\end{equation}

Substituting  $u(x)=\sqrt{1-x^2}\psi(x)$ into (\ref{s3-f18})
causes

\begin{equation}\label{s3-f19}
\begin{split}
\psi(x)=a_0+a_1x+a_2x^2+a_3x^3+\cdots+ a_mx^m+\frac{\alpha}{\pi
\Gamma(n)}\frac{d^{n-1}}{dx^{n-1}}\int_{-1}^{1}\frac{\sqrt{1-t^2}\psi(t)}{(t-x)}dt=,
\ \ \ -1<x<1.
\end{split}\end{equation}
Set
\begin{equation*}\begin{split}
&f_1(x)=b_0+b_1x+b_2x^2+b_3x^3+\cdots+b_mx^m,\\
&f_2(x)=(a_0-b_0)+(a_1-b_1)x+(a_2-b_2)x^2+(a_3-b_3)x^3+\cdots+(a_m-b_m)x^m.
\end{split}\end{equation*}

Therefore, to obtain $\psi_1(x)=0$ , by Lemma  (\ref{lem}) and the
weighted modified perturbation technique  (\ref{s2-f12}), we
acquire

\begin{equation*}
\left\{\begin{array}{cc}\begin{split}
 &b_s=a_s, \ \ \ m-n+3\leq s \leq m,\\
&b_{m-n+2}=a_{m-n+2}-\frac{\alpha}{\Gamma(n)}(m+1)m\cdots (m-n+3)b_m,\\
&\vdots\\
&b_{m-2k-n+2}=a_{m-2k-n+2}-\frac{\alpha}{\Gamma(n)}(m-2k+1)(m-2k)\cdots(m-2k-n+3)b_{m-2k}\\
&\qquad\qquad\quad+\frac{\alpha}{\Gamma(n)}(m-2k+1)(m-2k)\cdots(m-2k-n+3)\sum_{i=1}^{k}{\frac{\Gamma(2i-1)}{2^{2i-1}\Gamma(i+1)\Gamma(i)}b_{(m-2k+2i)}},\\
&b_{m-2k-n+1}=a_{m-2k-n+1}-\frac{\alpha}{\Gamma(n)}(m-2k)(m-2k-1)\cdots(m-2k-n+2)b_{m-2k-1}\\
&\qquad\qquad\quad+\frac{\alpha}{\Gamma(n)}(m-2k)(m-2k-1)\cdots(m-2k-n+2)\sum_{i=1}^{k}{\frac{\Gamma(2i-1)}{2^{2i-1}\Gamma(i+1)\Gamma(i)}b_{(m-1-2k+2i)}}.
\end{split}\end{array}\right.
 \end{equation*}
Consequently,   the exact solution of the integral equation
(\ref{s3-f18}) is given by:
\begin{equation*}\begin{split}
u(x)=\psi(x)\sqrt{1-x^2}=\psi_0(x)\sqrt{1-x^2}=(b_0+b_1x+b_2x^2+b_3x^3+\cdots+b_mx^m)\sqrt{1-x^2}.
\end{split}\end{equation*}
\end{example}

\section{Concluding remarks}
This paper has introduced a new method for solving an important class of hypersingular integral equations of the second kind. The main features of the proposed method are avoiding complicated procedures and having simplicity. Using  an example, we have shown that the classic perturbation method does not work while its weighted modified version can remedy this issue. 
\section{Acknowledgments}
The  authors appreciate Hamedan Branch of Islamic Azad University for the financial support of this research.

\end{document}